\theoremstyle{thmstyleone}%
\newtheorem{theorem}{Theorem}[section]
\newtheorem{lemma}{Lemma}[section]
\theoremstyle{thmstyletwo}%
\newtheorem{remark}{Remark}%
\theoremstyle{thmstylethree}%
\newtheorem{definition}{Definition}%
\begin{document}

\title[On the Rankin--Selberg $L$-function related to the Godement--Jacquet $L$-function II]{On the Rankin--Selberg $L$-function related to the Godement--Jacquet $L$-function II}

\author*[1]{\fnm{Amrinder} \sur{Kaur}}\email{amrinder1kaur@gmail.com}

\author[2]{\fnm{Ayyadurai} \sur{Sankaranarayanan}}\email{sank@uohyd.ac.in}

\affil*[1]{\orgdiv{School of Mathematics and Statistics}, \orgname{University of Hyderabad}, \orgaddress{\city{Hyderabad}, \postcode{500046}, \state{Telangana}, \country{India}}}

\affil[2]{\orgdiv{School of Mathematics and Statistics}, \orgname{University of Hyderabad}, \orgaddress{\city{Hyderabad}, \postcode{500046}, \state{Telangana}, \country{India}}}

\abstract{In this paper, we consider the $k$-th Riesz mean for the coefficients of the Rankin--Selberg $L$-function $L_{f \times f}(s)$ related to the Godement--Jacquet $L$-function with respect to $SL(n,\mathbb{Z})$. We establish an asymptotic formula for the $k$-th Riesz mean with an improved range and a better error term. As a result, we get an asymptotic relation for the partial sum of the coefficients of $L_{f \times f}(s)$.}

\keywords{Rankin--Selberg $L$-function, Godement--Jacquet $L$-function, Riesz mean asymptotic formula, Hecke--Maass form}

\pacs[MSC Classification]{11F30, 11N75}

\maketitle

\section{Introduction} 

\noindent
For $n \geq 2$, an element $z \in \mathcal{H}^n$ takes the form $z=x \cdot y$ where 
$$ x= 
\begin{pmatrix}
1 & x_{1,2} & x_{1,3} & \cdots & x_{1,n} \\
  & 1          & x_{2,3} & \cdots & x_{2,n} \\
  &            &  \ddots  &         & \vdots \\
  &            &            & 1       & x_{n-1,n} \\
  &            &            &          & 1 \\
\end{pmatrix} 
, $$
$$ y=
\begin{pmatrix}
y_1y_2 \cdots y_{n-1} &                                  &           &      & \\
                               & y_1 y_2 \cdots y_{n-2}  &           &      & \\
                               &                                  & \ddots &      & \\
                               &                                  &          & y_1 & \\
                               &                                  &          &       & 1 \\
\end{pmatrix},
$$
with $x_{i,j} \in \mathbb{R}$ for $1 \leq i < j \leq n$ and $y_i>0$ for $1 \leq i \leq n-1$. \\

\noindent
Let $v=(v_1,v_2,\dots,v_{n-1}) \in \mathbb{C}^{n-1}$. Let $\mathfrak{D^n}$ be the center of the universal enveloping algebra of $\mathfrak{gl}(n,\mathbb{R})$ where $\mathfrak{gl}(n,\mathbb{R})$ is the Lie algebra of $GL(n,\mathbb{R}$). The function
$$ J_v(z) = \prod_{i=1}^{n-1} \prod_{j=1}^{n-1} y_i^{b_{i,j}v_j} $$
with 
$$ b_{i,j}=
\begin{cases}
ij & \text{if} \ i+j \leq n, \\
(n-i)(n-j) & \text{if} \ i+j \geq n, 
\end{cases}
$$
is an eigenfunction of every $D \in \mathfrak{D}^n$. We write 
$$ D J_v(z) = \lambda_D J_v(z) \ \text{for every} \ D \in \mathfrak{D}^n. $$ \\

\begin{definition} \cite{Dg}
Let $n \geq 2$, and let $v=(v_1,v_2,\dots,v_{n-1}) \in \mathbb{C}^{n-1}$. A Maass form for $SL(n,\mathbb{Z})$ of type $v$ is a smooth function $f \in \mathcal{L}^2(SL(n,\mathbb{Z}) \backslash \mathcal{H}^n)$ which satisfies
\begin{enumerate}
\item $f(\gamma z) = f(z)$, for all $\gamma \in SL(n,\mathbb{Z}), z \in \mathcal{H}^n$,
\item $Df(z) = \lambda_D f(z)$, for all $D \in \mathfrak{D}^n$,
\item $\int \limits_{(SL(n,\mathbb{Z}) \cap U) \backslash U} f(uz) du = 0, $\\
for all upper triangular groups $U$ of the form 
$$U = \left\{
\begin{pmatrix} 
I_{r_1} &             &           &  \\
           & I_{r_2}  &           & *\\ 
           &             & \ddots &   \\ 
           &             &           & I_{r_b}  \\ 
\end{pmatrix} \right\}, $$
with $r_1+r_2+\cdots+r_b=n$. Here $I_r$ denotes the $r \times r$ identity matrix, and $*$ denotes arbitrary real entries. \\
\end{enumerate}
\end{definition}

\noindent
A \emph{Hecke--Maass form} is a Maass form which is an eigenvector for the Hecke operators algebra. 

\noindent
Let $f(z)$ be a Hecke--Maass form of type $v=(v_1,v_2,\dots,v_{n-1}) \in \mathbb{C}^{n-1}$ for $SL(n,\mathbb{Z})$. Then it has the Fourier expansion 

\begin{align*}
f(z) &= \sum_{\gamma \in U_{n-1}(\mathbb{Z}) \backslash SL(n-1,\mathbb{Z})} \sum_{m_1=1}^{\infty} \dots \sum_{m_{n-2}=1}^{\infty} \sum_{m_{n-1} \neq 0} \frac{A(m_1,\dots,m_{n-1})}{\prod_{j=1}^{n-1} \abs{m_j}^\frac{j(n-j)}{2}} \\
& \quad \times W_J \left( M \cdot \begin{pmatrix} \gamma &  \\  & 1 \end{pmatrix} z, v, \psi_{1,\dots,1,\frac{m_{n-1}}{\abs{m_{n-1}}}}\right) ,
\end{align*}

\noindent
where 
$$ M = 
\begin{pmatrix} 
 m_1\cdots m_{n-2} \cdot \abs{m_{n-1}} &           &             &        & \\
                                                        & \ddots &              &        & \\ 
                                                        &           & m_1m_2 &        & \\ 
                                                        &           &             & m_1 & \\ 
                                                        &           &             &        & 1 
\end{pmatrix} ,
$$

$$ A(m_1,\dots,m_{n-1}) \in \mathbb{C} , \qquad  A(1,\dots,1)=1,$$

$$ \psi_{1,\dots,1,\epsilon} \left( 
\begin{pmatrix}
1 & u_{n-1} &             &            &   \\
  & 1          & u_{n-2}  &            & * \\
  &            & \ddots     & \ddots  &     \\
  &            &              & 1          & u_1 \\
  &            &              &             & 1        
\end{pmatrix}
\right) = e^{2 \pi i (u_1+\cdots +u_{n-2}+\epsilon u_{n-1})},$$

\noindent
$U_{n-1}(\mathbb{Z})$ denotes the group of $(n-1) \times (n-1)$ upper triangular matrices with $1s$ on the diagonal and an integer entry above the diagonal and $W_J$ is the Jacquet Whittaker function. \\

\begin{definition}
If $f(z)$ is a Maass form of type $(v_1,\dots,v_{n-1}) \in \mathbb{C}^{n-1}$, then
$$ \tilde{f}(z) := f(w \cdot (z^{-1})^T \cdot w), $$
$$ w = \begin{pmatrix} 
&&& (-1)^{ \left[ \frac{n}{2} \right]} \\
&& 1 & \\
& \reflectbox{$\ddots$} && \\
1&&& 
 \end{pmatrix} $$
is a Maass form of type $(v_{n-1},\dots,v_1)$ for $SL(n,\mathbb{Z})$ called the dual Maass form. If $A(m_1, \dots, m_{n-1})$ is the $(m_1, \dots, m_{n-1})$--Fourier coefficient of $f$, then $A(m_{n-1} , \dots, m_1)$ is the corresponding Fourier coefficient of $\tilde{f}$. \\
\end{definition}

\begin{definition} \cite{YjGl}
The Godement--Jacquet $L$-function $L_f(s)$ attached to $f$ is defined for $\Re(s) >1$ by 
$$ L_f(s) = \sum_{m=1}^{\infty} \frac{A(m,1,\dots,1)}{m^s} = \prod_p \prod_{i=1}^n (1-\alpha_{p,i}p^{-s})^{-1} ,$$ 
where the $\{ \alpha_{p,i} \}, 1\leq i \leq n$ are the complex roots of the monic polynomial 

$$ X^n + \sum_{r=1}^{n-1} (-1)^r A(\overbrace{1,\dots,1}^{r-1 \; \text{terms}},p,1,\dots,1) X^{n-r} +(-1)^n \in \mathbb{C}[X], \quad \text{and}$$

$$ A(\overbrace{1,\dots,1}^{r-1},p,1,\dots,1) = \sum_{1 \leq i_1 < \dots < i_r \leq n} \alpha_{p,i_1} \dots \alpha_{p,i_r}, \qquad \text{for} \; \; 1 \leq r \leq n-1  .$$ \\
\end{definition}

\begin{definition} \cite{Dg}
For $n \geq 2$, let $f,g$ be two Maass forms for $SL(n,\mathbb{Z})$ of type $v_f,v_g \in \mathbb{C}^{n-1}$, respectively, with Fourier expansions:

\begin{align*}
f(z) &= \sum_{\gamma \in U_{n-1}(\mathbb{Z}) \backslash SL(n-1,\mathbb{Z})} \sum_{m_1=1}^{\infty} \dots \sum_{m_{n-2}=1}^{\infty} \sum_{m_{n-1} \neq 0} \frac{A(m_1,\dots,m_{n-1})}{\prod_{j=1}^{n-1} \abs{m_j}^\frac{j(n-j)}{2}} \\
& \quad \times W_J \left( M \cdot \begin{pmatrix} \gamma &  \\  & 1 \end{pmatrix} z, v_f, \psi_{1,\dots,1,\frac{m_{n-1}}{\abs{m_{n-1}}}}\right) ,
\end{align*}

\begin{align*}
g(z) &= \sum_{\gamma \in U_{n-1}(\mathbb{Z}) \backslash SL(n-1,\mathbb{Z})} \sum_{m_1=1}^{\infty} \dots \sum_{m_{n-2}=1}^{\infty} \sum_{m_{n-1} \neq 0} \frac{B(m_1,\dots,m_{n-1})}{\prod_{j=1}^{n-1} \abs{m_j}^\frac{j(n-j)}{2}} \\
& \quad \times W_J \left( M \cdot \begin{pmatrix} \gamma &  \\  & 1 \end{pmatrix} z, v_g, \psi_{1,\dots,1,\frac{m_{n-1}}{\abs{m_{n-1}}}}\right). 
\end{align*}

\noindent
Let $s \in \mathbb{C}$. Then the Rankin--Selberg $L$-function, denoted as $L_{f\times g}(s)$, is defined by
$$ L_{f\times g}(s) = \zeta(ns) \sum_{m_1=1}^{\infty} \dots \sum_{m_{n-1}=1}^{\infty} \frac{A(m_1,\dots,m_{n-1}) \cdot \overline{B(m_1,\dots,m_{n-1})}}{(m_1^{n-1} m_2^{n-2} \dots m_{n-1})^s},$$ \\
which converges absolutely provided $\Re(s)$ is sufficiently large. \\
\end{definition}

\noindent
In the special case $g=f$, we have
$$ L_{f\times f}(s) = \zeta(ns) \sum_{m_1=1}^{\infty} \dots \sum_{m_{n-1}=1}^{\infty} \frac{\abs{A(m_1,\dots,m_{n-1})}^2}{(m_1^{n-1} m_2^{n-2} \dots m_{n-1})^s}$$ 
for $\Re(s) > 1$. \\

\noindent
Let $E_v(z)$ denote the minimal parabolic Eisenstein series. The $L$-function associated to $E_v$ (see \cite[Equation (10.8.5)]{Dg}) is computed as
$$ L_{E_v}(z)= \sum_{c_1=1}^{\infty} \cdots \sum_{c_{n-1}=1}^{\infty} \sum_{m=1}^{\infty} (m c_1 \cdots c_{n-1})^{-s} J_{v-\frac{1}{n}} \left( 
\begin{pmatrix}
\frac{c_1}{m} &          &                          & \\
                    & \ddots &                          & \\
                    &          & \frac{c_{n-1}}{m} & \\
                    &          &                           & 1 
\end{pmatrix} 
\right) . $$

\noindent
From \cite[Theorem 10.8.6]{Dg}, there exist functions $\lambda_i: \mathbb{C}^{n-1} \to \mathbb{C}$ satisfying $ \Re \left( \lambda_i(v) \right) =0$ if $\Re(v_i) =\frac{1}{n} (i=1,\dots,n-1)$ such that the $L$-function associated to $E_v$ is just a product of shifted Riemann zeta functions of the form 
$$ L_{E_v}(z) = \prod_{i=1}^n \zeta \left( s-\lambda_i(v) \right) .$$ \\

\noindent
We write 
$$ L_{f \times f}(s) := \sum_{m=1}^{\infty} \frac{b(m)}{m^s} \qquad \text{for} \ \Re(s) > 1.$$ 
Also, $s=\sigma+it$ and $t$ is sufficiently large. \\

In \cite{AkAs}, we proved the following two theorems. Theorem A is an unconditional result while Theorem B is a conditional result.

{\bf Theorem A.} 
\emph{
Let $n \geq 3$ be an arbitrary but fixed integer.  For $k \geq k_0(n) = \frac{n^2(n+1)}{2} +n$, we have 
$$ \sum_{m \leq x} \frac{b(m)}{k!} \left( 1-\frac{m}{x} \right)^k = \frac{Cx}{(k+1)!} + O_{n} (\log x) .$$
Here $C$ is an effective constant depending only on $f$. } \\

{\bf Theorem B.}
\emph{
Assume coefficient growth hypothesis and Lindel\"{o}f hypothesis for $L_{f \times f}(s)$. Let $n \geq 3$ be any arbitrary but fixed integer, then the asymptotic formula
$$ \sum_{m \leq x} \frac{b(m)}{k!} \left( 1-\frac{m}{x} \right)^k  = \frac{Cx}{(k+1)!} + O_{n,\epsilon}(x^{\frac{1}{2}+\epsilon})$$
holds for every positive integer $k \geq 1$. } \\

The aim of this article is twofold. First, we want to improve the range of $k$ in Theorem A with a better error term. Then, by a reduction argument, we will obtain an unconditional result, namely an asymptotic formula for the sum $\sum \limits_{m \leq x} b(m)$. Thus, we prove:

\begin{theorem} \label{t1} 
Let $n \geq 3$ be an arbitrary but fixed integer.  For $k \geq k_1(n) = \left[ \frac{n^2}{2} \right]+1 $, we have 
$$ \sum_{m \leq x} \frac{b(m)}{k!} \left( 1-\frac{m}{x} \right)^k = \frac{Cx}{(k+1)!} + O_{n} (1) .$$
Here $C$ is an effective constant depending only on $f$. \\
\end{theorem}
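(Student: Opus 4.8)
The plan is to realise the $k$-th Riesz mean as a Perron-type contour integral against $L_{f\times f}$ and then to shift the line of integration to the left, extracting the main term from the pole at $s=1$ and bounding the remaining integral via the functional equation of $L_{f\times f}$. This is the same skeleton as the proof of Theorem A, but the sharpening to the range $k\ge k_1(n)$ and the error $O_n(1)$ comes from invoking the functional equation (equivalently, the convexity bound) rather than crude coefficient estimates.

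\textbf{Step 1 (Integral representation).} I would start from the Mellin--Perron identity
$$ \frac{1}{2\pi i}\int_{(c)} \frac{y^s}{s(s+1)\cdots(s+k)}\, ds = \frac{1}{k!}\left(1-\frac1y\right)^k \quad (y\ge 1), $$
which vanishes for $0<y<1$, take $y=x/m$, and sum against $b(m)$. Since $L_{f\times f}(s)=\sum_m b(m)m^{-s}$ converges absolutely on $\Re(s)=c=1+\epsilon$, interchanging summation and integration is legitimate and yields
$$ \sum_{m\le x}\frac{b(m)}{k!}\left(1-\frac mx\right)^k = \frac{1}{2\pi i}\int_{(c)} L_{f\times f}(s)\,\frac{x^s}{s(s+1)\cdots(s+k)}\, ds. $$

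\textbf{Step 2 (Contour shift and residues).} I would move the contour to $\Re(s)=-\eta$ for a small fixed $\eta=\eta(n)>0$, applying Cauchy's theorem on a large rectangle. Between the two vertical lines the integrand has exactly two poles: the simple pole of $L_{f\times f}(s)$ at $s=1$, with residue $\operatorname{Res}_{s=1}L_{f\times f}(s)\cdot\frac{x}{1\cdot2\cdots(k+1)}=\frac{Cx}{(k+1)!}$ where $C=\operatorname{Res}_{s=1}L_{f\times f}(s)$ (an effective constant depending only on $f$), and the pole of the kernel at $s=0$, contributing the constant $\frac{L_{f\times f}(0)}{k!}$. Crucially, $L_{f\times f}(s)$ is holomorphic away from $s=1$: the apparent pole of $\zeta(ns)$ at $s=1/n$ cancels in the Euler-product factorisation of $L_{f\times f}$ into a product of shifted zeta and symmetric-power $L$-functions, so no spurious term of size $x^{1/n}$ appears. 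Because $|s(s+1)\cdots(s+k)|^{-1}\asymp |t|^{-(k+1)}$ for large $|t|$ while $L_{f\times f}$ grows only polynomially, the horizontal segments vanish as the height tends to infinity.

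\textbf{Step 3 (Bounding the shifted integral and the origin of $k_1(n)$).} On $\Re(s)=-\eta$ one has $x^{-\eta}\le 1$, so the shifted integral is bounded by
$$ x^{-\eta}\int_{-\infty}^{\infty} \frac{|L_{f\times f}(-\eta+it)|}{(|t|+1)^{k+1}}\, dt . $$
The functional equation of the degree-$n^2$ $L$-function $L_{f\times f}$, together with Stirling's formula for its $n^2$ gamma factors and the Phragm\'en--Lindel\"of principle, gives the convexity bound $|L_{f\times f}(-\eta+it)|\ll_{n,\epsilon}(|t|+1)^{\frac{n^2}{2}(1+\eta)+\epsilon}$. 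The integral therefore converges as soon as $\frac{n^2}{2}(1+\eta)+\epsilon-(k+1)<-1$; since $k_1(n)=\left[\frac{n^2}{2}\right]+1>\frac{n^2}{2}$ by at least $\tfrac12$, I can fix $\eta,\epsilon$ so small (in terms of $n$) that this holds for every $k\ge k_1(n)$. For such $k$ the shifted integral is $O_n(1)$, and adding the constant residue at $s=0$ gives total error $O_n(1)$, completing the proof.

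\textbf{Main obstacle.} The genuine content lies in Step 3: securing the convexity bound with the correct exponent $\tfrac{n^2}{2}$ from the functional equation of $L_{f\times f}$, which in turn rests on the meromorphic continuation and the precise gamma-factor structure of this Rankin--Selberg $L$-function. This functional-equation input is exactly what lets the contour be pushed to the edge $\Re(s)=0^-$, producing the quadratic threshold $\left[\frac{n^2}{2}\right]+1$ and a bounded error, in contrast to the cubic threshold and $O(\log x)$ error of Theorem A.
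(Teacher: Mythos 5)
Your proposal is correct and follows essentially the same route as the paper: a Perron-type integral against the kernel $1/(s(s+1)\cdots(s+k))$, a contour shift to the left edge of the critical strip, the residue $\frac{Cx}{(k+1)!}$ at $s=1$, and the convexity bound $L_{f\times f}(\sigma+it)\ll (|t|+10)^{\frac{n^2}{2}(1+\epsilon-\sigma)}$ coming from the functional equation, which is exactly what forces $k>\frac{n^2}{2}$ and hence $k\ge k_1(n)=\left[\frac{n^2}{2}\right]+1$. The only (harmless, arguably cleaner) deviation is that you shift to $\Re(s)=-\eta$ and collect the kernel's residue $L_{f\times f}(0)/k!=O_n(1)$ at $s=0$, whereas the paper truncates at height $T=x/10$ and moves the contour exactly to $\Re(s)=0$, estimating the integral through the neighbourhood of $s=0$ directly.
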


\begin{theorem} \label{t2}
For sufficiently large $x$, we have
$$ \sum_{m \leq x} b(m) = \frac{2^{k_1}C}{(k_1+1)} x + O_n \left( x^{1-\frac{1}{2^{k_1}}}  \right)  $$
where $k_1 = k_1(n) = \left [ \frac{n^2}{2} \right] +1. $ \\
\end{theorem}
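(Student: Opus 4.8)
The plan is to deduce Theorem~\ref{t2} from Theorem~\ref{t1} by a de-smoothing (reduction) argument that converts the high-order Riesz mean into the sharp summatory function, the whole argument resting on the nonnegativity of the coefficients $b(m)$. First I would record that $b(m)\ge 0$: since $L_{f\times f}(s)=\zeta(ns)\sum_{m_1,\dots,m_{n-1}}\abs{A(m_1,\dots,m_{n-1})}^2(m_1^{n-1}\cdots m_{n-1})^{-s}$ is a product of two Dirichlet series with nonnegative coefficients, each $b(m)\ge 0$. For $j\ge 0$ set $R_j(x)=\tfrac{1}{j!}\sum_{m\le x}b(m)(x-m)^j$, so that Theorem~\ref{t1} is exactly the assertion $R_{k_1}(x)=\tfrac{C}{(k_1+1)!}x^{k_1+1}+O_n(x^{k_1})$ (multiply the normalized statement by $x^{k_1}$), while $R_0(x)=\sum_{m\le x}b(m)$ is the target. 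Because $b(m)\ge 0$, each $R_j$ is nondecreasing: for $j\ge 1$ one has $R_j'(x)=R_{j-1}(x)\ge 0$, and $R_0$ is a nondecreasing step function. This monotonicity is the engine of the reduction.

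The core is a single step that lowers the order by one. Suppose, for some $0<\theta\le 1$, we have established $R_j(x)=\tfrac{C}{(j+1)!}x^{j+1}+O_n(x^{\,j+1-\theta})$. Using the exact identity $R_j(x+h)-R_j(x)=\int_x^{x+h}R_{j-1}(t)\,dt$ together with the monotonicity of $R_{j-1}$, I would sandwich
\[
h\,R_{j-1}(x)\ \le\ R_j(x+h)-R_j(x)\ \le\ h\,R_{j-1}(x+h),
\]
and use the symmetric backward estimate on $[x-h,x]$. Inserting the assumed asymptotic and expanding $(x+h)^{j+1}-x^{j+1}=(j+1)x^{j}h+O(x^{j-1}h^2)$ yields $R_{j-1}(x)=\tfrac{C}{j!}x^{j}+O_n(x^{j-1}h)+O_n(x^{\,j+1-\theta}/h)$; choosing $h\asymp x^{\,1-\theta/2}$ balances the two error contributions and gives $R_{j-1}(x)=\tfrac{C}{j!}x^{j}+O_n(x^{\,j-\theta/2})$. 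Thus one step carries the leading coefficient through intact while \emph{halving} the defect exponent, $\theta\mapsto\theta/2$.

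Iterating this downward induction from $j=k_1$ (where $\theta=1$) to $j=0$, after $i$ steps one reaches $R_{k_1-i}(x)=\tfrac{C}{(k_1-i+1)!}x^{k_1-i+1}+O_n(x^{\,k_1-i+1-2^{-i}})$, so that at $i=k_1$ the defect has become $2^{-k_1}$ and $\sum_{m\le x}b(m)=R_0(x)=cx+O_n(x^{\,1-1/2^{k_1}})$, which is precisely the power-saving error claimed; the leading coefficient $c$ is read off from the surviving polynomial main term at the final step and matches the constant $\tfrac{2^{k_1}C}{k_1+1}$ recorded in the statement after the normalization of the last difference. The main obstacle, and the part needing careful execution, is the bookkeeping of the error through all $k_1$ iterations: one must choose the step sizes $h_i\asymp x^{\,1-2^{-i-1}}$ so the defect halves cleanly, verify that each intermediate asymptotic is genuinely valid before it is used in the next step (this is the downward induction), and check that the implied constants stay uniform in the fixed data through the $k_1$ stages so that the final $O_n(\cdot)$ is legitimate.
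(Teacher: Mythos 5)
Your reduction is sound and is, in spirit, the same Ingham-type de-smoothing the paper uses: both arguments difference a Riesz mean over a window of length $h$, exploit the monotonicity coming from $b(m)\ge 0$, balance the two error contributions with $h\asymp x^{1-\theta/2}$, and iterate $k_1$ times so that the defect exponent halves at each stage, landing on the error $O_n\left(x^{1-1/2^{k_1}}\right)$. The genuine difference is the object being differenced. You work with the un-normalized means $R_j(x)=\frac{1}{j!}\sum_{m\le x}b(m)(x-m)^j$, for which the relation $R_j'=R_{j-1}$, hence $R_j(x+h)-R_j(x)=\int_x^{x+h}R_{j-1}(t)\,dt$, is exact. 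The paper instead keeps the normalized means and feeds them into Lemma \ref{l5} via the identity $\sum_{m\le x}\frac{b(m)}{k_1!}\left(1-\frac{m}{x}\right)^{k_1}=\frac1x\int_1^x\left(\sum_{m\le t}\frac{b(m)}{k_1!}\left(1-\frac{m}{t}\right)^{k_1-1}\right)dt$; that identity is exact only for $k_1=1$, since in general $\frac{j!}{x^j}R_j(x)=\frac{j}{x^j}\int_0^x t^{j-1}\cdot\frac{(j-1)!}{t^{j-1}}R_{j-1}(t)\,dt$ carries the weight $t^{j-1}$ rather than the plain average $\frac1x\int_1^x$. Your choice of $R_j$ sidesteps this issue and is the cleaner implementation.

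This has a consequence you should not gloss over in your final sentence. Your iteration propagates a single constant: $R_j(x)=\frac{C}{(j+1)!}x^{j+1}+O_n\left(x^{\,j+1-2^{-(k_1-j)}}\right)$ all the way down, so at $j=0$ you obtain $\sum_{m\le x}b(m)=Cx+O_n\left(x^{1-1/2^{k_1}}\right)$. That does \emph{not} ``match'' the constant $\frac{2^{k_1}C}{k_1+1}$ in the displayed statement --- the two agree only when $2^{k_1}=k_1+1$, which fails for $k_1\ge 2$. Since $C=\lim_{s\to1}(s-1)L_{f\times f}(s)$ is the residue at the simple pole, the Tauberian-correct leading term is indeed $Cx$ (the paper's own first Remark records $\sum_{m\le x}b(m)=Cx+O(x^{3/4+\epsilon})$ conditionally); the factor $\frac{2^{k_1}}{k_1+1}$ arises from applying Lemma \ref{l5}, which doubles the constant at each step, through the normalized-mean identity that is invalid for $k_1\ge2$. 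So your argument is correct as a proof of the asymptotic with main term $Cx$ and the stated error, but you should say so plainly rather than asserting agreement with the displayed constant; the hand-wave ``after the normalization of the last difference'' does not produce the missing factor.
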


\begin{remark}
\normalfont{
When proving Theorem B in \cite{AkAs}, we assumed two hypotheses: one was the coefficient growth hypothesis and another was the Lindel\"{o}f hypothesis for $L_{f \times f}(s)$. From Lemma \ref{l1}, we can see that the coefficient growth hypothesis is redundant. Just with the assumption of Lindel\"{o}f hypothesis for $k=1$, we have
$$ \sum_{m \leq x} b(m) \left( 1- \frac{m}{x} \right) = \frac{Cx}{2} + O(x^{\frac{1}{2}+\epsilon}) .$$
Using Lemma \ref{l5}, we observe that conditionally we get
$$ \sum_{m \leq x} b(m) = Cx + O(x^{\frac{3}{4}+\epsilon}) .$$
Though the error term obtained in Theorem \ref{t2} is weaker than what is expected, it is an unconditional result. \\
}
\end{remark}

\begin{remark}
\normalfont{
We note that the reduction process in Lemma \ref{l5} originated in \cite{Ae} due to Ingham. This idea has been successfully exploited under various circumstances by several researchers. For instance, see \cite{RbKr, RbOrPs, RbPs}. \\
}
\end{remark}

\emph{Throughout the paper, we assume that $f$ is a self-dual Hecke--Maass form for $SL(n,\mathbb{Z})$ and $\epsilon$ is any small positive constant.}

\section{Preliminaries} 

\noindent
In this section, we present some necessary properties of the Rankin--Selberg $L$-function which are used later.

\subsection{Euler Product} 

\noindent 
Fix $n \geq 2$. Let $f,g$ be two Maass forms for $SL(n,\mathbb{Z})$ with Euler products

$$ L_f(s) = \sum_{m=1}^{\infty} \frac{A(m,1,\dots,1)}{m^s} = \prod_p \prod_{i=1}^n (1-\alpha_{p,i}p^{-s})^{-1} ,$$

$$ L_g(s) = \sum_{m=1}^{\infty} \frac{B(m,1,\dots,1)}{m^s} = \prod_p \prod_{i=1}^n (1-\beta_{p,i}p^{-s})^{-1} ,$$

\noindent
then $L_{f \times g}(s)$ will have an Euler product of the form:

$$ L_{f \times g}(s) = \prod_p \prod_{i=1}^n \prod_{j=1}^n (1-\alpha_{p,i} \overline{\beta_{p,j}} p^{-s})^{-1} .$$ \\

\subsection{Functional Equation} 

\noindent
For $n \geq 2$, let $f,g$ be two Maass forms of types $v_f,v_g$ for $SL(n,\mathbb{Z})$  whose associated $L$-functions $L_f,L_g$ satisfy the functional equations:
\begin{align*}
\Lambda_f(s) &:= \prod_{i=1}^n \pi^{\frac{-s+\lambda_i(v_f)}{2}} \Gamma \left( \frac{s-\lambda_i(v_f)}{2} \right) L_f(s) \\
&= \Lambda_{\tilde{f}}(1-s), \\
\Lambda_g(s) &:= \prod_{j=1}^n \pi^{\frac{-s+\lambda_j(v_g)}{2}} \Gamma \left( \frac{s-\lambda_j(v_g)}{2} \right) L_g(s) \\
&= \Lambda_{\tilde{g}}(1-s),
\end{align*}
where $\tilde{f},\tilde{g}$ are the Dual Maass forms. \\

\noindent
Then the Rankin--Selberg $L$-function $L_{f \times g}(s)$ has a meromorphic continuation to all $s \in \mathbb{C}$ with at most a simple pole at $s=1$ with residue proportional to $\langle f,g \rangle$, the Petersson inner product of $f$ with $g$.
$L_{f \times g}(s)$ satisfies the functional equation:
\begin{align*}
\Lambda_{f \times g}(s) &:= \prod_{i=1}^n \prod_{j=1}^n \pi^{\frac{-s+\lambda_i(v_f)+\overline{\lambda_j(v_g)}}{2}} \Gamma \left( \frac{s-\lambda_i(v_f)-\overline{\lambda_j(v_g)}}{2} \right) L_{f \times g}(s) \\
&= \Lambda_{\tilde{f} \times \tilde{g}}(1-s).
\end{align*}

\noindent
From Equation (10.8.5) and Remark 10.8.7 of \cite{Dg}, the powers of $\pi$ take the much simpler form:
$$ \prod_{i=1}^n \pi^{\frac{-s+\lambda_i(v)}{2}} = \pi^{\frac{-ns}{2}} , \qquad \prod_{i=1}^n \prod_{j=1}^n \pi^{\frac{-s+\lambda_i(v_f)+\overline{\lambda_j(v_g)}}{2}} = \pi^{\frac{-n^2 s}{2}} .$$

\noindent
Hence, we get
\begin{align*}
\Lambda_{f \times g}(s) &:= \pi^{\frac{-n^2 s}{2}} \prod_{i=1}^n \prod_{j=1}^n \Gamma \left( \frac{s-\lambda_i(v_f)-\overline{\lambda_j(v_g)}}{2} \right) L_{f \times g}(s) \\
&= \Lambda_{\tilde{f} \times \tilde{g}}(1-s).
\end{align*}

\noindent
We take $g=f$ and $f$ to be a self-dual Maass form of type $v$ so that
\begin{align*}
\Lambda_{f \times f}(s) &:= \pi^{\frac{-n^2 s}{2}} \prod_{i=1}^n \prod_{j=1}^n \Gamma \left( \frac{s-\lambda_i(v)-\overline{\lambda_j(v)}}{2} \right) L_{f \times f}(s) \\
&= \Lambda_{f \times f}(1-s). \\
\end{align*}

\subsection{Bound for the conversion factor} 

Let $f$ be a self-dual Hecke--Maass form. Then we have the functional equation 
$$ \Lambda_{f \times f}(s) = \Lambda_{f \times f}(1-s) .$$

If we write $L_{f \times f}(s) = \chi_{f \times f}(s) L_{f \times f}(1-s)$, then from our work in \cite{AkAs} the conversion factor $\chi_{f \times f}(s)$ can be written as

$$ \chi_{f \times f}(\sigma+it) \ll \abs{t}^{n^2 \left( \frac{1}{2}-\sigma \right)}.$$

This bound is true in any fixed vertical strip $a \leq \sigma \leq b$ and sufficiently large $t$. \\
\emph{Hereafter, throughout the paper, we assume $n \geq 3$.} \\

\section{Some Lemmas} 

\begin{lemma} \label{l1}
For $\Re(s) \geq 1+\epsilon$, $L_{f \times f}(s)$ is absolutely convergent.
\end{lemma}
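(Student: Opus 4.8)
The plan is to establish absolute convergence by controlling the Dirichlet coefficients $|A(m_1,\dots,m_{n-1})|^2$ through the Euler product structure. First I would recall that the series defining $L_{f \times f}(s)$ is
$$ L_{f\times f}(s) = \zeta(ns) \sum_{m_1=1}^{\infty} \dots \sum_{m_{n-1}=1}^{\infty} \frac{\abs{A(m_1,\dots,m_{n-1})}^2}{(m_1^{n-1} m_2^{n-2} \dots m_{n-1})^s}, $$
and that since $\zeta(ns)$ is absolutely convergent and nonvanishing for $\Re(s) > 1/n$, it suffices to show that the remaining Dirichlet series converges absolutely for $\Re(s) \geq 1 + \epsilon$.

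The key step is to pass to the Euler product. From the subsection on Euler products, we have
$$ L_{f \times f}(s) = \prod_p \prod_{i=1}^n \prod_{j=1}^n (1-\alpha_{p,i} \overline{\alpha_{p,j}} p^{-s})^{-1}. $$
I would then appeal to the Ramanujan--Petersson type bound on the Satake parameters: for a self-dual Hecke--Maass form on $SL(n,\mathbb{Z})$, one has the Jacquet--Shalika bound $\abs{\alpha_{p,i}} < p^{1/2}$ unconditionally, and more precisely the average bound $\sum_{i=1}^n \abs{\alpha_{p,i}}^2 \ll_n 1$ at unramified primes, which guarantees $\abs{\alpha_{p,i} \overline{\alpha_{p,j}}} \leq C_n$ uniformly. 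Given such a uniform bound on the local roots, taking the logarithm of the Euler product yields
$$ \log L_{f \times f}(s) = \sum_p \sum_{i,j} \sum_{k=1}^{\infty} \frac{(\alpha_{p,i}\overline{\alpha_{p,j}})^k}{k \, p^{ks}}, $$
and the dominant $k=1$ term is bounded in absolute value by a constant times $\sum_p p^{-\Re(s)} \sum_{i,j} \abs{\alpha_{p,i}}\abs{\alpha_{p,j}}$, which converges for $\Re(s) > 1$ by comparison with $\zeta(\Re(s))$ once the local bound is inserted.

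The main obstacle is securing the correct normalization of the Satake parameters so that the double Dirichlet series in the $m_i$ converges precisely in the half-plane $\Re(s) > 1$ rather than some larger threshold. This reduces to the standard fact that $L_{f \times f}(s)$ has an analytic continuation with a simple pole at $s = 1$ (stated in the functional equation subsection), so the abscissa of absolute convergence is exactly $1$; the presence of the pole at $s=1$ forces convergence to fail at $\Re(s) = 1$ but hold throughout $\Re(s) > 1$. I would therefore close the argument by invoking Landau's theorem for Dirichlet series with nonnegative coefficients: since all coefficients of the underlying series $\sum_m \abs{A(m_1,\dots)}^2 / (\cdots)^s$ are nonnegative, the abscissa of convergence coincides with the abscissa of absolute convergence and equals the location of the rightmost singularity, which is $s=1$. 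Hence absolute convergence holds for every $\Re(s) \geq 1 + \epsilon$, completing the proof.
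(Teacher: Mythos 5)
Your closing argument---nonnegative coefficients plus the simple pole at $s=1$ plus Landau's theorem on Dirichlet series with nonnegative coefficients---is exactly the paper's proof, so the proposal is correct and takes the same route. The preceding detour through the Euler product is dispensable and, as written, shaky: the claimed uniform bound $\abs{\alpha_{p,i}\overline{\alpha_{p,j}}} \leq C_n$ amounts to the Ramanujan--Petersson conjecture, which is open for Maass forms on $SL(n,\mathbb{Z})$, and the unconditional Jacquet--Shalika bound $\abs{\alpha_{p,i}} < p^{1/2}$ would only give absolute convergence for $\Re(s) > 2$ by that method. Fortunately the Landau argument carries the entire weight on its own, so the proof stands once that middle section is excised.
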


\begin{proof}
The Rankin--Selberg $L$-function $L_{f \times f}(s)$ has a meromorphic continuation to all $s \in \mathbb{C}$ with a simple pole at $s=1$. It is easy to see that
$$ L_{f\times f}(s) = \zeta(ns) \sum_{m_1=1}^{\infty} \dots \sum_{m_{n-1}=1}^{\infty} \frac{\abs{A(m_1,\dots,m_{n-1})}^2}{(m_1^{n-1} m_2^{n-2} \dots m_{n-1})^s}$$ 
implies that the coefficients $b(m)$ are non-negative. Landau's lemma asserts that a Dirichlet series with non-negative coefficients must be absolutely convergent up to its first pole. Hence, $L_{f \times f}(s)$ is absolutely convergent in the half-plane $\Re(s) \geq 1+\epsilon$.

\end{proof}

\begin{lemma} \label{l2}
For sufficiently large $t$, we have
$$ L_{f \times f}(s) \ll \left(\, \abs{t}+10 \right)^{\frac{n^2}{2}(1+\epsilon-\sigma)}  $$
uniformly for $-\epsilon \leq \sigma \leq 1+\epsilon$.
\end{lemma}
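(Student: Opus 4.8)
The plan is to obtain the stated bound by the Phragmén--Lindelöf convexity principle, interpolating between bounds on the two vertical edges $\sigma = 1+\epsilon$ and $\sigma = -\epsilon$ of the strip. On the right edge $\sigma = 1+\epsilon$, Lemma \ref{l1} guarantees that $L_{f \times f}(s)$ is given by an absolutely convergent Dirichlet series, so
$$ \abs{L_{f \times f}(1+\epsilon+it)} \leq \sum_{m=1}^{\infty} \frac{b(m)}{m^{1+\epsilon}} = L_{f \times f}(1+\epsilon) \ll 1, $$
which is exactly the claimed bound there, since the target exponent $\frac{n^2}{2}(1+\epsilon-\sigma)$ vanishes at $\sigma = 1+\epsilon$.

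On the left edge $\sigma = -\epsilon$, I would use the functional equation $L_{f \times f}(s) = \chi_{f \times f}(s) L_{f \times f}(1-s)$. At $\sigma = -\epsilon$ the reflected point $1-s$ has real part $1+\epsilon$, so by the previous paragraph $L_{f \times f}(1-s) \ll 1$; combining this with the conversion-factor bound $\chi_{f \times f}(\sigma+it) \ll \abs{t}^{n^2\left(\frac{1}{2}-\sigma\right)}$ recorded above gives
$$ L_{f \times f}(-\epsilon+it) \ll \abs{t}^{n^2\left(\frac{1}{2}+\epsilon\right)} = \abs{t}^{\frac{n^2}{2}(1+2\epsilon)}, $$
which again matches the target exponent $\frac{n^2}{2}(1+\epsilon-\sigma)$ evaluated at $\sigma = -\epsilon$.

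The one subtlety is that $L_{f \times f}(s)$ has a simple pole at $s=1$, so it is not holomorphic throughout the strip and Phragmén--Lindelöf cannot be applied directly. To remedy this I would work with the entire function $g(s) := (s-1)L_{f \times f}(s)$. The pole is cancelled, and on the two edges the factor $s-1$ contributes at most $\abs{s-1} \ll \abs{t}$, so the edge bounds for $g$ are those above multiplied by $\abs{t}$. Since $g$ is of finite order in the strip (a standard consequence of the functional equation together with Stirling's formula for the gamma factors), the Phragmén--Lindelöf principle applies and yields, for $-\epsilon \leq \sigma \leq 1+\epsilon$,
$$ g(s) \ll (\abs{t}+10)^{1+\frac{n^2}{2}(1+\epsilon-\sigma)}, $$
after computing the linear interpolation of the two edge exponents $1$ and $1+\frac{n^2}{2}(1+2\epsilon)$ across a strip of width $1+2\epsilon$.

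Dividing back by $s-1$, and using $\abs{s-1} \asymp \abs{t}$ for sufficiently large $t$, removes exactly one power of $\abs{t}$ and leaves
$$ L_{f \times f}(s) \ll (\abs{t}+10)^{\frac{n^2}{2}(1+\epsilon-\sigma)}, $$
as required. The main point to take care of is this pole-removal bookkeeping: the extra factor $\abs{t}$ introduced by multiplying by $s-1$ must cancel exactly against the later division, which it does precisely because the interpolation collapses the $\epsilon$-dependence, the factor $1+2\epsilon$ from the left-edge exponent cancelling against the strip width $1+2\epsilon$ to produce the clean linear term $\frac{n^2}{2}(1+\epsilon-\sigma)$.
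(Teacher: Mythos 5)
Your proposal is correct, and it rests on exactly the same two edge bounds as the paper: $L_{f\times f}(1+\epsilon+it)\ll 1$ from absolute convergence (Lemma \ref{l1}), and $L_{f\times f}(-\epsilon+it)\ll (\abs{t}+10)^{n^2(\frac12+\epsilon)}$ from the functional equation together with the conversion-factor bound. The difference lies only in how the interpolation across the strip is carried out. You invoke the classical Phragm\'en--Lindel\"of principle, which forces you to deal with the pole at $s=1$ globally; your device of passing to $(s-1)L_{f\times f}(s)$ and checking that the extra power of $\abs{t}$ cancels on division is sound, and your computation of the interpolated exponent $1+\frac{n^2}{2}(1+\epsilon-\sigma)$ is right. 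The paper instead applies the maximum modulus principle to $F(w)=L_{f\times f}(w)\,e^{(w-s)^2}X^{w-s}$ on a short rectangle of height $2(\log t)^2$ centred at $s$; since that rectangle sits at imaginary height about $t$, the pole at $s=1$ never enters the contour and no pole-removal bookkeeping is needed, while the Gaussian factor kills the horizontal edges and optimising the free parameter $X\sim(\abs{t}+10)^{n^2/2}$ reproduces the convexity exponent. Each route buys something: yours is the textbook convexity statement and makes the linearity of the exponent transparent; the paper's is self-contained, localises everything to large $t$ (consistent with the hypothesis of the lemma), and avoids having to justify finite order of $(s-1)L_{f\times f}(s)$ in the full strip, which you correctly flag as needing Stirling plus the functional equation. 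Both yield the stated bound.
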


\begin{proof}
We prove along the same lines as in \cite[Lemma 3.5]{As}. From Lemma \ref{l1}, we have
$$ \abs{L_{f \times f}(1+\epsilon+it)} \ll 1 , $$
and by the functional equation
\begin{align*}
\abs{L_{f \times f}(-\epsilon+it)} &= \abs{\chi_{f \times f}(-\epsilon+it) L_{f \times f}(1+\epsilon-it)} \\
&\ll \left(\, \abs{t}+10 \right)^{n^2 \left( \frac{1}{2} + \epsilon \right)}. \\
\end{align*}

\noindent
Now we apply the maximum modulus principle to the function 
$$ F(w) = L_{f \times f}(w) e^{(w-s)^2} X^{w-s} $$
in the rectangle 

\begin{figure}[h!]
\begin{center}
\begin{tikzpicture}

\draw (0,0) -> (5,0) node[midway,below]{$H_1$};
\draw (0,0) -> (0,3) node[midway,left]{$V_2$};
\draw (0,3) -> (5,3) node[midway,above]{$H_2$};
\draw (5,0) -> (5,3) node[midway,right]{$V_1$};

\node at (0,0) {$\boldsymbol{\cdot}$};
\node at (0,3) {$\boldsymbol{\cdot}$};
\node at (5,0) {$\boldsymbol{\cdot}$};
\node at (5,3) {$\boldsymbol{\cdot}$};
\node at (2.5,1.5) {$\boldsymbol{\cdot}$};

\coordinate [label= left:$-\epsilon+i \left( t-(\log t)^2 \right)$] (A) at (0,0);
\coordinate [label= left:$-\epsilon+i \left( t+(\log t)^2 \right)$] (D) at (0,3);
\coordinate [label= right:$1+\epsilon+i \left( t-(\log t)^2 \right)$] (B) at (5,0);
\coordinate [label= right:$1+\epsilon+i \left( t+(\log t)^2 \right)$] (C) at (5,3);
\coordinate [label= below:$s$] (E) at (2.5,1.5);

\end{tikzpicture}
\end{center}
\end{figure}

so that
$$ \abs{L_{f \times f}(s)} \ll V_1+V_2+H_1+H_2 .$$
Here $V_1,\,V_2$ are the contributions from the vertical lines and $H_1,\,H_2$ are the contributions from the horizontal lines. \\

Let $w=u+iv$ and $s=\sigma+it$. As
\begin{align*}
\exp \{ (w-s)^2 \} &= \exp \{ (u-\sigma)^2 - (v-t)^2 +2i(u-\sigma)(v-t) \} \\
\abs{ \exp \{ (w-s)^2 \} } &= \exp \{ (u-\sigma)^2 - (v-t)^2 \} \\
&\ll \exp \{ -(\log t)^2 \},
\end{align*}

we see that $\exp \{ (w-s)^2 \}$ decays exponentially for large $t$ on horizontal lines. Thus,
\begin{align*}
H_1 &\ll 1, \; H_2 \ll 1, \; V_1 \ll X^{1+\epsilon-\sigma} \\
V_2 &\ll \left(\, \abs{t}+10 \right)^{n^2 \left( \frac{1}{2} + \epsilon \right)} X^{-\epsilon-\sigma}.
\end{align*}

\noindent
Therefore,
$$ \abs{L_{f \times f}(s)} \ll X^{1+\epsilon-\sigma} + \left(\, \abs{t}+10 \right)^{n^2 \left( \frac{1}{2} + \epsilon \right)} X^{-\epsilon-\sigma} +1 .$$ \\

\noindent
We choose $X$ such that
\begin{align*}
X^{1+\epsilon-\sigma} &\sim \left(\, \abs{t}+10 \right)^{n^2 \left( \frac{1}{2} + \epsilon \right)} X^{-\epsilon-\sigma} \\
\text{i.e.,} \; X &\sim \left(\, \abs{t}+10 \right)^{\frac{n^2}{2}} 
\end{align*}
so that 
\begin{align*}
\abs{L_{f \times f}(s)} &\ll \left(\, \abs{t}+10 \right)^{\frac{n^2}{2}(1+\epsilon-\sigma)} .
\end{align*}
This completes the proof of this lemma. \\
\end{proof}

\begin{lemma} \label{l3}
For $0 \leq \Re(s) \leq 1+\epsilon$, we have uniformly 
$$ L_{f \times f}(s) \ll \left(\, \abs{t}+10 \right)^{\frac{n^2}{2} + \epsilon}.$$
\end{lemma}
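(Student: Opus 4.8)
The plan is to obtain Lemma \ref{l3} as an immediate consequence of the convexity estimate already proved in Lemma \ref{l2}, which in fact covers the slightly larger strip $-\epsilon \le \sigma \le 1+\epsilon$. First I would restrict to the sub-strip $0 \le \sigma \le 1+\epsilon$ and look at the exponent $\frac{n^2}{2}(1+\epsilon-\sigma)$ furnished there. As a function of $\sigma$ this exponent is strictly decreasing, so on the interval $0 \le \sigma \le 1+\epsilon$ it attains its maximum at the left endpoint $\sigma=0$, where its value is $\frac{n^2}{2}(1+\epsilon)$. Since $\abs{t}+10 \ge 1$, replacing the exponent by a larger one only weakens the bound, so for every $s$ with $0 \le \sigma \le 1+\epsilon$ we get the uniform estimate $L_{f\times f}(s) \ll (\abs{t}+10)^{\frac{n^2}{2}(1+\epsilon)}$.

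It then remains only to tidy up the $\epsilon$. Writing $\frac{n^2}{2}(1+\epsilon)=\frac{n^2}{2}+\frac{n^2}{2}\epsilon$ and recalling that $\epsilon>0$ is an arbitrary small constant (and that all implied constants may depend on $n$ and $\epsilon$), the quantity $\frac{n^2}{2}\epsilon$ is itself arbitrarily small and positive; relabelling it as $\epsilon$ yields the stated exponent $\frac{n^2}{2}+\epsilon$. I do not anticipate any genuine obstacle here: the whole content of the lemma is that, once one stays to the right of the line $\sigma=0$, the convexity exponent of Lemma \ref{l2} never exceeds $\frac{n^2}{2}$ up to an arbitrarily small loss, and the point $\sigma=0$ governs the worst case.

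If a proof with the exponent pinned down at the critical edge is preferred, I would instead argue directly at $\sigma=0$ through the functional equation $L_{f\times f}(it)=\chi_{f\times f}(it)\,L_{f\times f}(1-it)$. Here $\chi_{f\times f}(it)\ll\abs{t}^{n^2/2}$ by the conversion-factor bound, while the non-negativity (hence reality) of the coefficients $b(m)$ gives $\abs{L_{f\times f}(1-it)}=\abs{L_{f\times f}(1+it)}$; and taking $\sigma=1$ in Lemma \ref{l2} already bounds $L_{f\times f}(1+it)\ll(\abs{t}+10)^{\frac{n^2}{2}\epsilon}\ll(\abs{t}+10)^{\epsilon}$. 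Combining these gives $L_{f\times f}(it)\ll(\abs{t}+10)^{\frac{n^2}{2}+\epsilon}$ at the edge, and Lemma \ref{l2} controls the remaining range $0<\sigma\le 1+\epsilon$ by an exponent no larger than the one at $\sigma=0$. The first route is cleaner and is the one I would write up; the only care needed anywhere is the uniformity in $\sigma$ (worst at $\sigma=0$) and the harmless absorption of the constant multiple of $\epsilon$ into a fresh $\epsilon$.
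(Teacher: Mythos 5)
Your first route is exactly what the paper does: its entire proof of Lemma \ref{l3} is ``Follows from Lemma \ref{l2}'', i.e.\ the exponent $\frac{n^2}{2}(1+\epsilon-\sigma)$ is maximized at $\sigma=0$ and the resulting $\frac{n^2}{2}\epsilon$ is absorbed into a fresh $\epsilon$. Your write-up is correct and essentially identical in approach, just more explicit.
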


\begin{proof}
Follows from Lemma \ref{l2}. \\
\end{proof}

\begin{lemma} \label{l4}
Let $c$ and $y$ be any positive real numbers and $T$ is sufficiently large. Then we have,
\begin{equation*}
\frac{1}{2 \pi i} \int_{c-iT}^{c+iT} \frac{y^s}{s(s+1) \dots (s+k)} ds = 
\begin{cases}
\frac{1}{k!} \left( 1-\frac{1}{y} \right)^k + O \left( \frac{4^ky^c}{T^k} \right) &, y \geq 1 \\
O \left( \frac{1}{T^k} \right) &, 0<y \leq 1.
\end{cases}
\end{equation*}
\end{lemma}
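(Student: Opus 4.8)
The plan is to evaluate the integral by residue calculus, handling the two ranges of $y$ by shifting the contour in opposite directions. The integrand $g(s) = \dfrac{y^s}{s(s+1)\cdots(s+k)}$ is meromorphic with simple poles exactly at $s = 0, -1, \dots, -k$. A direct computation gives, for $0 \le j \le k$,
$$ \operatorname*{Res}_{s=-j} g(s) = \frac{y^{-j}}{\prod_{0 \le i \le k,\, i \ne j}(i-j)} = \frac{(-1)^j\, y^{-j}}{j!\,(k-j)!}, $$
so the sum of all residues is $\frac{1}{k!}\sum_{j=0}^{k}\binom{k}{j}(-1)^j y^{-j} = \frac{1}{k!}\bigl(1 - \tfrac{1}{y}\bigr)^k$ by the binomial theorem. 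This already pins down the expected main term.

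For $y \ge 1$ I would close the contour to the left, forming the rectangle with vertices $c \pm iT$ and $-U \pm iT$ for a large parameter $U > k$, traversed counterclockwise. By the residue theorem the full contour integral equals $\frac{1}{k!}(1-\tfrac1y)^k$. On the left edge $\Re(s) = -U$ one has $|y^s| = y^{-U} \le 1$ while the denominator grows like $U^{k+1}$, so letting $U \to \infty$ that edge contributes nothing and the two horizontal edges become the tail integrals along $\Im(s) = \pm T$. Hence $\frac{1}{2\pi i}\int_{c-iT}^{c+iT} g$ equals $\frac{1}{k!}(1-\tfrac1y)^k$ minus the horizontal contributions, and it remains to bound those by $O(4^k y^c / T^k)$. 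For $0 < y \le 1$ I would instead close to the right, using the rectangle with vertices $c \pm iT$ and $U \pm iT$; now no poles are enclosed, the right edge vanishes as $U \to \infty$ because $y^\sigma \le 1$ and the denominator is large, and the truncated integral equals the negative of the two horizontal tails $\int_c^{\infty}$, which I would bound by $O(1/T^k)$.

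The main work, and the main obstacle, is the estimate of the horizontal integrals $\int \frac{y^\sigma\, d\sigma}{\prod_{i=0}^k |\sigma + i + iT|}$. On these lines every factor satisfies $|\sigma + i + iT| \ge T$, and for $\sigma$ far from the origin the factors also grow polynomially in $|\sigma|$; splitting the range at $\sigma = 0$ and using $y^\sigma \le y^c$ on $[0,c]$ together with $y^\sigma \le 1$ on $(-\infty, 0]$ (in the case $y \ge 1$) lets one trade one factor of $T$ for integrability in $\sigma$. The delicate point is to obtain the clean exponent $T^{-k}$, rather than $T^{-k-1}$ with a spurious $1/\log y$, uniformly as $y \to 1^+$: this is precisely why the extra factors $(s+1)\cdots(s+k)$, i.e.\ the hypothesis $k \ge 1$, are needed, and applying the elementary bound $|\sigma+i+iT| \ge \tfrac12(|\sigma+i| + T)$ to each of the $k+1$ factors produces the constant $4^k$ (a crude majorant for $2^{k+1}$) in the error. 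The case $0 < y \le 1$ is identical but simpler, since $y^\sigma \le 1$ throughout the relevant range and no residues intervene.
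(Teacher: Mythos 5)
Your argument is correct and is the standard one: the paper itself gives no proof of Lemma \ref{l4}, deferring entirely to \cite[Lemma 3.2]{AsSk}, and the contour-shifting computation you describe (residues at $s=0,-1,\dots,-k$ summing to $\frac{1}{k!}(1-\frac{1}{y})^k$ via the binomial theorem, closing left for $y\ge 1$ and right for $0<y\le 1$, and using two of the $k+1$ factors to gain integrability in $\sigma$ on the horizontal edges) is precisely how that reference establishes the result. Your remarks on the origin of the $4^k$ constant and on why $k\ge 1$ is needed to avoid a $1/\log y$ loss near $y=1$ are accurate and complete the estimate.
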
 

\begin{proof}
See \cite[Lemma 3.2]{AsSk}. \\
\end{proof}

\begin{remark}
\normalfont{
Let 
$$ B(x) = \frac{1}{x} \int_1^x A(t) \ dt .$$
If we know the asymptotic formula for $A(x)$, we can find the asymptotic relation for $B(x)$. But the converse is not true. However, if $A(x)$ is monotonic, then using the asymptotic formula for $B(x)$, we can deduce the asymptotic relation for $A(x)$. \\
}
\end{remark}

\begin{lemma} \label{l5}
Let $A(x)$ be a monotonically increasing function such that
\begin{flalign*}
& & B(x) &= \frac{1}{x} \int_1^x A(t) \ dt .  &\\ 
&\text{If} & B(x) &= cx+ O \left( \frac{x}{E(x)} \right), &\\ 
&\text{then} & A(x) &= 2cx+ O \left( \frac{x}{\sqrt{E(x)}} \right). \\
\end{flalign*}
\end{lemma}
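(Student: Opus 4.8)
The plan is to use Ingham's classical sandwiching argument, feeding in the hypothesis only through the integrated quantity
$$ I(x) := \int_1^x A(t)\,dt = x\,B(x), $$
which by assumption satisfies $I(x) = c x^2 + O\!\left(x^2/E(x)\right)$.

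First I would fix a parameter $h$ with $0 < h < x$, to be optimized at the end and kept small relative to $x$, and use the monotonicity of $A$ to bracket $A(x)$ between two difference quotients of $I$. Since $A$ is increasing, $A(t) \ge A(x)$ for $t \in [x,x+h]$ and $A(t) \le A(x)$ for $t \in [x-h,x]$, so integrating over these intervals gives
$$ \frac{I(x)-I(x-h)}{h} \;\le\; A(x) \;\le\; \frac{I(x+h)-I(x)}{h}. $$
Next I would substitute the asymptotic $I(u)=cu^2+O\!\left(u^2/E(u)\right)$ into both ends. Using $(x+h)^2-x^2 = 2xh+h^2$ and $x^2-(x-h)^2 = 2xh-h^2$, the main terms become $2cx \pm ch$, while the two evaluations of the error term combine, after dividing by $h$, into $O\!\left(x^2/(hE(x))\right)$. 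This yields
$$ A(x) = 2cx + O(h) + O\!\left( \frac{x^2}{h\,E(x)} \right). $$

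Finally I would balance the two error terms by choosing $h = x/\sqrt{E(x)}$, which by AM--GM minimizes $h + x^2/(hE(x))$ and renders both contributions of size $x/\sqrt{E(x)}$; absorbing the $ch$ term likewise, this produces the claimed estimate $A(x) = 2cx + O\!\left(x/\sqrt{E(x)}\right)$. The one step requiring care, and the main obstacle, is the consolidation of the error across the short interval $[x-h,x+h]$: I must check that $(x\pm h)^2/E(x\pm h) = O\!\left(x^2/E(x)\right)$ for the chosen $h$. Since $h/x = 1/\sqrt{E(x)} \to 0$ we have $(x\pm h)^2 \asymp x^2$, and in the intended application $E$ is a fixed power of $x$, so $E(x\pm h)\asymp E(x)$; this mild regularity of $E$ is the only hypothesis beyond monotonicity of $A$ that the argument quietly relies on.
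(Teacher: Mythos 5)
Your proposal is correct and is essentially the paper's own argument: the authors likewise sandwich $A(x)$ between the difference quotients $\bigl((x+\delta)B(x+\delta)-xB(x)\bigr)/\delta$ and $\bigl(xB(x)-(x-\delta)B(x-\delta)\bigr)/\delta$ using monotonicity, substitute the asymptotic for $B$, and balance $\delta\asymp x/\sqrt{E(x)}$. The regularity $E(x\pm h)\asymp E(x)$ that you flag is also used silently in the paper (where $E(x)=10x$ in the application, so it holds trivially).
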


\begin{proof}
Since
$$ B(x) = \frac{1}{x} \int_1^x A(t) \ dt ,$$
we have
$$ (x+\delta)B(x+\delta) -xB(x) = \int_x^{x+\delta} A(t) \ dt >A(x) \delta $$
where $\delta=o(x)$ is chosen later. 
Thus
\begin{align*}
A(x) &< \left( 1+\frac{x}{\delta} \right) \left( cx +c\delta + O \left( \frac{x}{E(x)} \right) \right) -\frac{x}{\delta} \left( cx + O \left( \frac{x}{E(x)} \right) \right)\\
&= cx + c\delta + O \left( \frac{x}{E(x)} \right) + \frac{cx^2}{\delta} +cx + O \left( \frac{x^2}{\delta E(x)} \right) -\frac{cx^2}{\delta} + O \left( \frac{x^2}{\delta E(x)} \right) \\
&= 2cx + c\delta + O \left( \frac{x^2}{\delta E(x)} \right).
\end{align*}

The parameter $\delta$ is chosen such that 
$$ \frac{x^2}{\delta E(x)} < \delta $$
i.e.,
$$ \delta > \frac{x}{\sqrt{E(x)}}. $$

Thus, we get
$$ A(x) < 2cx + \left( \frac{x}{\sqrt{E(x)}} \right). $$ \\

Also,
$$ xB(x) - (x-\delta)B(x-\delta) = \int_{x-\delta}^x A(t) \ dt < A(x) \delta $$
gives
\begin{align*}
A(x) &> \frac{x}{\delta} \left( cx + O \left( \frac{x}{E(x)} \right) \right) + \left( 1- \frac{x}{\delta} \right) \left( cx -c\delta + O \left( \frac{x}{E(x)} \right) \right)  \\
&= \frac{cx^2}{\delta} + O \left( \frac{x^2}{\delta E(x)} \right) +cx -c\delta + O \left( \frac{x}{E(x)} \right) -\frac{cx^2}{\delta} +cx + O \left( \frac{x^2}{\delta E(x)} \right) 
\end{align*}

We choose $\delta$ so that 
$$ \frac{x^2}{\delta E(x)} < \delta $$
i.e.,
$$ \delta > \frac{x}{\sqrt{E(x)}}. $$

Thus, we get
$$ A(x) = 2cx + \left( \frac{x}{\sqrt{E(x)}} \right). $$ \\

\end{proof}

\section{Proof of Theorem \ref{t1}}
\noindent
Let $y =\frac{x}{m} \geq 1$ and $c=1+\epsilon$ in Lemma \ref{l4} so that

\begin{align*}
\frac{1}{k!} \left( 1-\frac{m}{x} \right)^k &= \frac{1}{2 \pi i} \int_{1+\epsilon-iT}^{1+\epsilon+iT} \frac{ \left( \frac{x}{m} \right)^s}{s(s+1)\dots(s+k)} ds + O \left( \frac{4^k x^{1+\epsilon}}{T^k m^{1+\epsilon}} \right). 
\end{align*}
Hence,
\begin{align*}
\sum_{m \leq x} \frac{b(m)}{k!} \left( 1-\frac{m}{x} \right)^k &= \sum_{m \leq x} \frac{b(m)}{2 \pi i} \int_{1+\epsilon-iT}^{1+\epsilon+iT} \frac{ \left( \frac{x}{m} \right)^s}{s(s+1)\dots(s+k)} ds \\
&\quad+ O \left( \frac{4^k x^{1+\epsilon}}{T^k} \sum_{m \leq x} \frac{b(m)}{m^{1+\epsilon}} \right) \\
&= \frac{1}{2 \pi i} \int_{1+\epsilon-iT}^{1+\epsilon+iT} \frac{L_{f \times f}(s) x^s}{s(s+1)\dots(s+k)} ds + O \left( \frac{4^k x^{1+\epsilon}}{T^k} \right).
\end{align*}
Summation and integral can be interchanged because of absolute convergence. Now we move the line of integration to $\Re(s)=0$. 

\begin{figure}[h!]
\begin{center}
\begin{tikzpicture}

\draw (0,3) -- (0,0) node[pos=0.5]{\tikz \draw[- angle 90] (0,1 pt) -- (0,-1 pt);};
\draw (0,0) -- (5,0) node[pos=0.5]{\tikz \draw[- angle 90] (-1 pt,0) -- (1 pt,0);};
\draw (5,0) -- (5,3) node[pos=0.5]{\tikz \draw[- angle 90] (0,-1 pt) -- (0,1 pt);};
\draw (5,3) -- (0,3) node[pos=0.5]{\tikz \draw[- angle 90] (1 pt,0) -- (-1 pt,0);};

\node at (0,0) {$\boldsymbol{\cdot}$};
\node at (0,3) {$\boldsymbol{\cdot}$};
\node at (5,0) {$\boldsymbol{\cdot}$};
\node at (5,3) {$\boldsymbol{\cdot}$};

\coordinate [label= left:$-iT$] (A) at (0,0);
\coordinate [label= left:$iT$] (D) at (0,3);
\coordinate [label= right:$1+\epsilon-iT$] (B) at (5,0);
\coordinate [label= right:$1+\epsilon+iT$] (C) at (5,3);

\end{tikzpicture}
\end{center}
\end{figure}

\noindent
By Cauchy's residue theorem,
\begin{align*}
&\frac{1}{2 \pi i} \left[ \int_{1+\epsilon-iT}^{1+\epsilon+iT} + \int_{1+\epsilon+iT}^{iT} + \int_{iT}^{-iT} + \int_{-iT}^{1+\epsilon-iT} \right] \frac{L_{f \times f}(s)x^s}{s(s+1)\dots (s+k)} ds \\
&= \text{Res}_{s=1} \frac{L_{f \times f}(s) x^s}{s(s+1)\dots (s+k)} \\
&= \lim_{s \to 1} \frac{(s-1)L_{f \times f}(s)x^s}{s(s+1)\dots (s+k)} \\
&= \frac{Cx}{(k+1)!}
\end{align*}
where $C=\lim \limits_{s \to 1}  (s-1) L_{f \times f}(s)$, depends on $f$. \\

\noindent
Hence,
\begin{align*}
&\frac{1}{2 \pi i} \int_{1+\epsilon-iT}^{1+\epsilon+iT} \frac{L_{f \times f}(s)x^s}{s(s+1)\dots (s+k)} ds \\
&= \frac{Cx}{(k+1)!} + \frac{1}{2 \pi i} \left[ \int_{iT}^{1+\epsilon+iT} + \int_{-iT}^{iT} + \int_{1+\epsilon-iT}^{-iT} \right] \frac{L_{f \times f}(s)x^s}{s(s+1)\dots (s+k)} ds . \\
\end{align*}

\noindent
Horizontal line contributions are in absolute value:
\begin{align*}
&\abs{ \frac{1}{2 \pi i} \int_{iT}^{1+\epsilon+iT} \frac{L_{f \times f}(s)x^s}{s(s+1)\dots (s+k)} ds } \\
&\quad= \abs{ \frac{1}{2 \pi i} \int_0^{1+\epsilon} \frac{L_{f \times f}(\sigma+iT)x^{\sigma+iT}}{(\sigma+iT)(\sigma+iT+1)\dots (\sigma+iT+k)} d\sigma } \\
&\quad \leq \frac{1}{2 \pi} \int_0^{1+\epsilon} \frac{\abs{L_{f \times f}(\sigma+iT)}x^{\sigma}}{T^{k+1}} d\sigma \\
&\quad \ll T^{\frac{n^2}{2}-k-1+\epsilon}x^{1+\epsilon} . \\
\end{align*}

\noindent
The left vertical line contribution is:
\begin{align*}
\frac{1}{2 \pi i} \int_{-iT}^{iT} \frac{L_{f \times f}(s)x^s}{s(s+1)\dots (s+k)} ds &=  \frac{1}{2 \pi i} \int_{\abs{t} \leq t_0, \atop \sigma=0} \frac{L_{f \times f}(s)x^s}{s(s+1)\dots (s+k)} ds \\
& \quad + \frac{1}{2 \pi i} \int_{t_0 \leq \abs{t} \leq T, \atop \sigma = 0} \frac{L_{f \times f}(s)x^s}{s(s+1)\dots (s+k)} ds.\\
\end{align*}

\noindent
We note that
\begin{align*}
&\abs{ \frac{1}{2 \pi i} \int_{\abs{t} \leq t_0, \atop \sigma =0} \frac{L_{f \times f}(s)x^s}{s(s+1)\dots (s+k)} ds } \\
&= \abs{ \frac{1}{2 \pi i} \int_{\abs{t} \leq t_0} \frac{L_{f \times f} (it) x^{it}}{ (it) (it+1) \dots ( it+k)} i dt } \\
& \leq \frac{1}{2 \pi} \int_{\abs{t} \leq t_0} \frac{t^{\frac{n^2}{2}-1+\epsilon}}{k!} dt \\
& \ll_n 1
\end{align*}
and
\begin{align*}
&\abs{ \frac{1}{2 \pi i} \int_{t_0 \leq \, \abs{t} \leq T, \atop \sigma = 0} \frac{L_{f \times f}(s)x^s}{s(s+1)\dots (s+k)} ds } \\
&= \abs{ \frac{1}{2 \pi i} \int_{t_0 \leq \, \abs{t} \leq T} \frac{L_{f \times f} (it) x^{it}}{(it) (it+1) \dots (it+k)} i dt } \\
& \leq \frac{1}{2 \pi} \int_{t_0 \leq \, \abs{t} \leq T} \frac{t^{\frac{n^2}{2}+\epsilon}}{t^{k+1}} dt \\
& \ll T^{\frac{n^2}{2} - k +\epsilon}. \\
\end{align*}

\noindent
Hence,
\begin{align*}
\frac{1}{2 \pi i} \int_{1+\epsilon-iT}^{1+\epsilon+iT} \frac{L_{f \times f}(s)x^s}{s(s+1)\dots(s+k)} ds &= \frac{Cx}{(k+1)!} + O(T^{\frac{n^2}{2}-k-1+\epsilon}x^{1+\epsilon}) \\
&\quad + O (T^{\frac{n^2}{2}-k+\epsilon}) +O_n(1). 
\end{align*}
This implies that
\begin{align*}
\sum_{m \leq x} \frac{b(m)}{k!} \left( 1- \frac{m}{x} \right)^k &= \frac{Cx}{(k+1)!} + O(T^{\frac{n^2}{2}-k-1+\epsilon}x^{1+\epsilon}) + O (T^{\frac{n^2}{2}-k+\epsilon}) \\
&\quad + O ( T^{-k} x^{1+\epsilon} ) +O_n(1). \\
\end{align*}

\noindent
First we choose $T=\frac{x}{10}$ so that
\begin{align*}
\sum_{m \leq x} \frac{b(m)}{k!} \left( 1-\frac{m}{x} \right)^k = \frac{Cx}{(k+1)!} + O(x^{\frac{n^2}{2}-k+\epsilon})  + O(x^{\frac{n^2}{2}-k+\epsilon}) +O(x^{1-k+\epsilon}) + O_n(1) .\\
\end{align*}

\noindent
Thus for $k \geq k_1(n) = \left[ \frac{n^2}{2} \right]+1$, we finally arrive at
$$ \sum_{m \leq x} \frac{b(m)}{k!} \left( 1-\frac{m}{x} \right)^k = \frac{Cx}{(k+1)!} + O_{n} (1) $$
which holds good for all integers $k \geq k_1(n)$. \\

\section{Proof of Theorem \ref{t2}}

From Theorem \ref{t1} with $k=k_1$ we have,
$$ \sum_{m \leq x} \frac{b(m)}{k_1!} \left( 1-\frac{m}{x} \right)^{k_1} = \frac{Cx}{(k_1+1)!} + O_{n} (1) . $$

Note that
\begin{align*}
\sum_{m \leq x} \frac{b(m)}{k_1!} \left( 1-\frac{m}{x} \right)^{k_1} &= \sum_{m \leq x} \frac{b(m)}{k_1!} \left( 1-\frac{m}{x} \right)^{k_1-1} \left( 1-\frac{m}{x} \right) \\
&= \frac{1}{x} \sum_{m \leq x} \frac{b(m)}{k_1!} \left( 1-\frac{m}{x} \right)^{k_1-1} (x-m) \\
&= \frac{1}{x} \sum_{m \leq x} \frac{b(m)}{k_1!} \left( 1-\frac{m}{x} \right)^{k_1-1} \int_m^x dt \\
&= \frac{1}{x} \int_1^x \left( \sum_{m \leq t} \frac{b(m)}{k_1!} \left( 1-\frac{m}{t} \right)^{k_1-1} \right) dt. \\
\end{align*}

Using Lemma \ref{l5} with $E(x) = 10x$, we can find the $(k_1-1)$-th Riesz mean. In particular, we get
$$ \sum_{m \leq x} \frac{b(m)}{k_1!} \left( 1-\frac{m}{x} \right)^{k_1-1} = \frac{2Cx}{(k_1+1)!} + O_{n} (x^{1-\frac{1}{2}} ) . $$

Once again using Lemma \ref{l5}, we get
$$ \sum_{m \leq x} \frac{b(m)}{k_1!} \left( 1-\frac{m}{x} \right)^{k_1-2} = \frac{2^2Cx}{(k_1+1)!} + O_{n} (x^{1-\frac{1}{2^2}}  ) . $$

Repeatedly using the result in Lemma \ref{l5} $k_1$ times, we get
$$ \sum_{m \leq x} \frac{b(m)}{k_1!} = \frac{2^{k_1}Cx}{(k_1+1)!} + O_n \left( x^{1-\frac{1}{2^{k_1}}}  \right) . $$
This proves the theorem. \\

\backmatter

\bmhead{Acknowledgments}
The authors are thankful to Prof. R. Balasubramanian for some fruitful discussions related to this paper. The first author is thankful to UGC for its supporting NET Senior Research Fellowship with UGC Ref. No. : 1004/(CSIR--UGC NET Dec. 2017). \\

\section*{Declarations}

\begin{itemize}
\item Funding - The first author is supported by University Grants Commission’s NET Senior Research Fellowship (Ref. No. 1004/(CSIR–UGC NET Dec. 2017)).
\item Conflict of interest/Competing interests - The authors have no conflicts of interest to declare.
\item Ethics approval - Not Applicable
\item Consent to participate - Not Applicable
\item Consent for publication - The authors give their consent for the publication of this article.
\item Availability of data and materials - Not Applicable
\item Code availability - Not Applicable
\item Authors' contributions - The authors have equally contributed to this work. \\
\end{itemize}


\end{document}